\documentclass[10pt]{article}
\usepackage{amsmath,amsthm,amsfonts,latexsym,amsopn,verbatim,amscd,amssymb}
\theoremstyle{plain}
\newtheorem{theorem}{Theorem}[section]
\newtheorem{corollary}[theorem]{Corollary}
\newtheorem{definition}[theorem]{Definition}
\newtheorem{proposition}[theorem]{Proposition}

\newcommand{\bnum}{\begin{enumerate}}
\newcommand{\enum}{\end{enumerate}}

\numberwithin{equation}{section}

\DeclareMathOperator{\Aut}{Aut}
\DeclareMathOperator{\Inn}{Inn}
\DeclareMathOperator{\orb}{orb}
\DeclareMathOperator{\cl}{cl}
\begin{document}

\title{\textbf{On relative autocommutativity degree of a subgroup of a finite group}}
\author{Parama Dutta and Rajat Kanti Nath\footnote{Corresponding author}}
\date{}
\maketitle
\begin{center}\small{\it
Department of Mathematical Sciences, Tezpur University,\\ Napaam-784028, Sonitpur, Assam, India.\\

%Department of Mathematical Sciences,   Tezpur University,\\ Napaam-784028, Sonitpur, Assam, India.\\

%Department of Mathematical Sciences,   Tezpur University,\\ Napaam-784028, Sonitpur, Assam, India.\\

Emails:\, parama@gonitsora.com and rajatkantinath@yahoo.com}
\end{center}

\medskip

\begin{abstract}
In this paper, we consider the probability that a randomly chosen automorphism of a finite group fixes a randomly chosen element of a subgroup of that group. We obtain several new results as well as  generalizations and improvements of some existing results on this probability.
\end{abstract}

\medskip

\noindent {\small{\textit{Key words:} Automorphism group, Autocommuting probability, Autoisoclinism.}

\noindent {\small{\textit{2010 Mathematics Subject Classification:} 20D60, 20P05, 20F28.}}

\medskip

\section{Introduction}
Let $H$ be a subgroup of a finite group $G$ and $\Aut(G)$ be the automorphism group of $G$. The relative autocommutativity degree of $H$ denoted by ${\Pr}(H,\Aut(G))$ is the probability that a randomly chosen automorphism of $G$ fixes a randomly chosen element of $H$. In other words
\begin{equation}\label{GenAutComDeg}
{\Pr}(H,\Aut(G)) = \frac {\left|\{(x, \alpha)\in H \times \Aut(G) :  \alpha(x) = x\}\right|}{|H||\Aut(G)|}.
\end{equation}
The notion of ${\Pr}(H,\Aut(G))$ was introduced in \cite{moga} and studied in \cite{moga,Rismanchian15}. Note that ${\Pr}(G,\Aut(G))$ is the probability that an automorphism of $G$ fixes an element of it. The ratio ${\Pr}(G,\Aut(G))$ is also known as autocommutativity degree of $G$. It is worth mentioning that the study of autocommutativity degree of a finite group was initiated by Sherman  \cite{sherman}, in the year 1975. 
% In 1975, Sherman \cite{sherman} initiated the study of the probability that an automorphism fixes a group element. That is he study the ratio:
%\[
%{\Pr}(G,\Aut(G)) = \frac {\left|\{(x,\alpha)\in G\times \Aut(G):[x,\alpha]=1\}\right|}{|G||\Aut(G)|}
%\]
%where $[x,\alpha]$  is the autocommutator of $x$ and $\alpha$ defined as \, $x^{-1}\alpha (x)$.
% The ratio ${\Pr}(G,\Aut(G))$ is called autocommuting probability of $G$. This ratio is further studied in \cite{Rismanchian15,aK16}. Moghaddam et. al further generalized this concept in \cite{moga}.
%That is ${\Pr}_g(H,\Aut(G))$ is the probability  that the autocommutator of a randomly chosen pair of elements, one from $H$ and the other from $\Aut(G)$, is equal to the identity element of $G$. Rismanchian and Sepehrizadeh also gave this definition in \cite{Rismanchian15} and they called it relative autocommutativity degree of the subgroup $H$ of $G$. 
In this paper, we obtain several new results on ${\Pr}(H,\Aut(G))$ including some generalizations and improvements of existing results. 
%The motivation of this paper lies in \cite{erl} where analogous properties are studied. 
%In this paper we study ${\Pr}(G,\Aut(G))$ extensively including some computing formulae, some bounds and characterisation. Finally we see that this ratio is invarient under autoisoclinism. The motivation of this paper lies in \cite{erl} and \cite{nY15} where analogous properties are studied. 

For any element $x \in G$ and $\alpha \in \Aut(G)$ we write $[x,\alpha] := x^{-1}\alpha (x)$,  the autocommutator of $x$ and $\alpha$. We also write $S(H,\Aut(G)) := \{[x,\alpha] : x \in H \text{ and } \alpha \in \Aut(G)\}$, $L(H,\Aut(G)):= \{x \in H : \alpha(x) = x \text{ for all }\alpha \in \Aut(G)\}$ and $[H,\Aut(G)] := \langle S(H,\Aut(G)) \rangle$.  Note that $L(H,\Aut(G))$ is a normal subgroup of $H$ contained in  $H \cap Z(G)$ and $L(H,\Aut(G)) = \underset{\alpha \in \Aut(G)}{\cap}C_H(\alpha)$, where $Z(G)$ is the center of $G$ and   $C_H(\alpha) := \{x\in H:\alpha(x) = x\}$ is a subgroup of $H$. If $H = G$ then $L(H,\Aut(G)) = L(G)$, the absolute centre of $G$ (see \cite{hegarty}). Let $C_{\Aut(G)}(x) := \{\alpha\in \Aut(G) : \alpha(x) = x\}$ for  $x \in H$ and  $C_{\Aut(G)}(H) := \{\alpha \in \Aut(G) : \alpha (x)  = x \text{ for all } x \in H\}$. Then $C_{\Aut(G)}(x)$ is a subgroup of $\Aut(G)$ and  $C_{\Aut(G)}(H) = \underset{x\in H}{\cap}C_{\Aut(G)}(x)$.

%
%
%We write $S(H,\Aut(G))$ to denote the set  $\{[x,\alpha] : x \in H \text{ and } \alpha \in \Aut(G)\}$ and  $[H,\Aut(G)] := \langle S(H,\Aut(G)) \rangle$. We also write
%$L(H,\Aut(G)) := \{x \in H : [x, \alpha] = 1 \text{ for all }\alpha \in \Aut(G)\}$ and $L(G) := L(G,\Aut(G))$, the absolute center of $G$ (see \cite{hegarty}). Note that $L(H,\Aut(G))$ is a normal subgroup of $H$ contained in  $H \cap Z(G)$. Further, $L(H,\Aut(G)) = \underset{\alpha \in \Aut(G)}{\cap}C_H(\alpha)$, where   $C_H(\alpha) = \{x\in H:[x,\alpha] = 1\}$ is a subgroup of $H$.
%Let $C_{\Aut(G)}(x) := \{\alpha\in \Aut(G) : \alpha(x) = x\}$ for  $x \in H$ and  $C_{\Aut(G)}(H) = \{\alpha \in \Aut(G) : \alpha (x)  = x \text{ for all } x \in H\}$. Then $C_{\Aut(G)}(x)$ is a subgroup of $\Aut(G)$ and  $C_{\Aut(G)}(H) = \underset{x\in H}{\cap}C_{\Aut(G)}(x)$.
%
%
It is easy to see that
\[
\{(x, \alpha)\in H \times \Aut(G) : \alpha(x) = x\} = \underset{x\in H}{\sqcup}(\{x\} \times C_{\Aut(G)}(x)) = \underset{\alpha \in \Aut(G)}{\sqcup}(C_H(\alpha) \times \{\alpha\}),
\]
where $\sqcup$ stands for disjoint union of sets. Hence
\begin{equation}\label{formula2}
|H||\Aut(G)|{\Pr}(H,\Aut(G)) = \underset{x\in H}{\sum}|C_{\Aut(G)}(x)| =  \underset{\alpha \in \Aut(G)}{\sum}|C_H(\alpha)|.
\end{equation}
Also $\Aut(G)$ acts on $G$   by the action $(\alpha,x)\mapsto \alpha(x)$  for $\alpha \in \Aut(G)$ and $x\in G$.  Let $\orb(x) := \{\alpha(x) : \alpha \in \Aut(G)\}$ be the orbit of $x \in G$. Then by orbit-stabilizer theorem, we have
%\begin{equation}\label{orbit-stabilizer-Thm}
$
|\orb(x)| = |\Aut(G)|/|C_{\Aut(G)}(x)|
$
%\end{equation}
and hence, \eqref{formula2} gives the following generalization of \cite[Proposition 2]{aK16}
\begin{equation}\label{grpdefn}
  {\Pr}(H,\Aut(G))  =   \frac {1}{|H|}\underset{x\in H}{\sum}\frac{1}{|\orb(x)|} = \frac {|\orb(H)|}{|H|}
\end{equation}
where $\orb(H) = \{\orb(x) : x \in H\}$.

 Note that ${\Pr}(H,\Aut(G)) = 1$ if and only if $H = L(H,\Aut(G))$. Therefore, we consider $H$ to be a subgroup of $G$ such that $H \ne L(H,\Aut(G))$ throughout the paper.
\section{Some upper bounds}
In this section we obtain several upper bounds for $\Pr(H,\Aut(G))$. We begin with the following result.

\begin{proposition}
Let $H$ and $K$ be two subgroups of a finite group $G$ such that $H \subseteq K$. Then
\[
\Pr(H,\Aut(G))\leq |K : H|\Pr(K,\Aut(G)).
\]
The equality holds if and only if $H = K$.
\end{proposition}
\begin{proof}
By  \eqref{formula2}, we have
\begin{align*}
|H||\Aut(G)|\Pr(H,\Aut(G))&= \underset{x\in H}{\sum}|C_{\Aut(G)}(x)|\\
&\leq \underset{x\in K}{\sum}|C_{\Aut(G)}(x)|\\
&=|K||\Aut(G)|\Pr(K,\Aut(G)).
\end{align*}
%$${\Pr}_g(H_1,\Aut(G))\leq |H_2:H_1|{\Pr}_g(H_2,\Aut(G)).$$
Hence, the result follows.
\end{proof}
\noindent As a corollary, we have the following result.
\begin{corollary}
Let $H$ be a subgroup of a finite group $G$. Then
\[
\Pr(H,\Aut(G))\leq |G : H|\Pr(G,\Aut(G))
\]
with  equality if and only if $H = G$.
\end{corollary}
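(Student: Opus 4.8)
The plan is to obtain this Corollary as a direct specialization of the preceding Proposition rather than by reproving anything from scratch. The Proposition already establishes, for any pair of subgroups $H \subseteq K$ of $G$, both the inequality $\Pr(H,\Aut(G)) \leq |K:H|\,\Pr(K,\Aut(G))$ and the characterization of equality. Since $G$ is itself a subgroup of $G$ and $H \subseteq G$, the hypothesis of the Proposition is satisfied with the choice $K = G$, and the Corollary should fall out immediately.

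First I would invoke the Proposition with $K = G$. This is legitimate because $H$ is assumed to be a subgroup of the finite group $G$, so the containment $H \subseteq G$ holds trivially and $G$ plays the role of the larger subgroup $K$. Substituting $K = G$ into the conclusion of the Proposition yields $\Pr(H,\Aut(G)) \leq |G:H|\,\Pr(G,\Aut(G))$, which is exactly the asserted bound. No new computation with \eqref{formula2} or with the centralizer sums is needed, since all of that work has already been carried out in the proof of the Proposition.

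For the equality clause, I would again transcribe the Proposition's characterization under the same substitution: equality holds if and only if $H = K$, and here $K = G$, so equality holds precisely when $H = G$. Thus both halves of the statement are recovered simultaneously.

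Since the Corollary is a pure specialization, there is essentially no genuine obstacle; the only point that even requires a remark is the verification that $G$ qualifies as an admissible choice of $K$, which it does because any group is a subgroup of itself. I would therefore keep the proof to a single sentence stating that the result is the Proposition applied with $K = G$.
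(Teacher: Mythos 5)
Your proposal is correct and matches the paper exactly: the paper presents this corollary immediately after the Proposition with no separate proof, precisely because it is the specialization $K = G$, including the equality characterization $H = K = G$.
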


\begin{theorem}\label{thm3.6}
Let $H$ be a subgroup of a finite group $G$ and $p$ the smallest prime dividing $|\Aut(G)|$.  Then
\[
\Pr(H,\Aut(G)) \leq \frac {(p - 1)|L(H,\Aut(G))| + |H|}{p|H|} - \frac {|X_H|(|\Aut(G)| - p)}{p|H||\Aut(G)|},
\]
where $X_H = \{x \in H : C_{\Aut(G)}(x) = \{I\}\}$ and $I$ is the identity automorphism of $G$.
\end{theorem}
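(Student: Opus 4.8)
The plan is to start from the counting identity \eqref{formula2}, namely $|H||\Aut(G)|\Pr(H,\Aut(G)) = \sum_{x \in H}|C_{\Aut(G)}(x)|$, and to estimate the right-hand side by partitioning $H$ according to the size of the stabilizer $C_{\Aut(G)}(x)$. Since $p$ is defined we have $|\Aut(G)| \geq 2$, so the sets $L(H,\Aut(G))$ and $X_H$ are disjoint; I would therefore split $H$ into the three disjoint pieces $L(H,\Aut(G))$, $X_H$, and $R := H \setminus (L(H,\Aut(G)) \cup X_H)$.

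On each piece I would bound the term $|C_{\Aut(G)}(x)|$. For $x \in L(H,\Aut(G))$ we have $C_{\Aut(G)}(x) = \Aut(G)$, so the term equals $|\Aut(G)|$; for $x \in X_H$ the term equals $1$ by definition; and for $x \in R$ the subgroup $C_{\Aut(G)}(x)$ is a \emph{proper} subgroup of $\Aut(G)$. The key observation --- and essentially the only group-theoretic input --- is that for such a proper subgroup the index $[\Aut(G):C_{\Aut(G)}(x)]$ is a divisor of $|\Aut(G)|$ strictly greater than $1$, hence at least $p$; therefore $|C_{\Aut(G)}(x)| \leq |\Aut(G)|/p$ for every $x \in R$.

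Combining these three estimates and writing $\ell = |L(H,\Aut(G))|$ and $m = |X_H|$, I would obtain
\[
\sum_{x\in H}|C_{\Aut(G)}(x)| \leq \ell\,|\Aut(G)| + m + (|H| - \ell - m)\frac{|\Aut(G)|}{p}.
\]
Dividing by $|H||\Aut(G)|$ and regrouping --- the $\ell$-terms combine into $(p-1)\ell/(p|H|)$, while the $m$-terms collapse to exactly $-m(|\Aut(G)|-p)/(p|H||\Aut(G)|)$ --- then reproduces the stated inequality after routine simplification.

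I do not expect a serious obstacle here; the delicate point is purely bookkeeping. The advantage of isolating $X_H$ rather than absorbing it into $R$ is that on $X_H$ the exact value $1$ beats the generic bound $|\Aut(G)|/p$, and the resulting saving, $m(|\Aut(G)|/p - 1)/(|H||\Aut(G)|) = m(|\Aut(G)|-p)/(p|H||\Aut(G)|)$, is precisely the subtracted term in the theorem. The one step where an arithmetic slip is most likely is verifying that the algebraic regrouping reproduces this correction term exactly, so I would carry that simplification out with care.
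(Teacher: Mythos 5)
Your proposal is correct and follows essentially the same route as the paper: the paper likewise splits $H$ into $L(H,\Aut(G))$, $X_H$, and the remainder, bounds $|C_{\Aut(G)}(x)| \leq |\Aut(G)|/p$ on the remainder via the smallest-prime-divisor argument, and simplifies using \eqref{formula2}. Your algebraic regrouping of the $\ell$- and $m$-terms checks out exactly, so there is nothing to add.
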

\begin{proof}
We have   $X_H \cap L(H,\Aut(G)) = \phi$. Therefore
\begin{align*}
\underset{x\in H}{\sum}|C_{\Aut(G)}(x)| = & \, |X_H| + |\Aut(G)||L(H,\Aut(G))| \\
& + \underset{x\in H\setminus (X_H\cup L(H,\Aut(G)))}{\sum}|C_{\Aut(G)}(x)|.
\end{align*}

For $x  \in H \setminus (X_H \cup L(H,\Aut(G)))$ we have $C_{\Aut(G)}(x) \lneq  \Aut(G)$ which implies $|C_{\Aut(G)}(x)|\leq \frac {|\Aut(G)|}{p}$. Therefore
\begin{align}\label{our_bound2}
\underset{x\in H}{\sum}|C_{\Aut(G)}(x)| \leq & |X_H| + |\Aut(G)||L(H,\Aut(G))| \nonumber\\
& + \frac{|\Aut(G)|(|H| - |X_H| - |L(H,\Aut(G))|)}{p}.
\end{align}
Hence, the result follows from  \eqref{formula2} and \eqref{our_bound2}.
\end{proof}
We would like to mention here  that the upper bound obtained in Theorem \ref{thm3.6} is better than the upper bound obtained in \cite[Theorem 2.3 (i)]{moga}.
We also have  the following improvement of \cite[Corollary 2.2]{moga}.
\begin{corollary}\label{bound_like3/4}
Let $H$ be a subgroup of a finite group $G$. If $p$ and $q$ are the smallest primes dividing  $|\Aut(G)|$ and $|H|$ respectively then
\[
\Pr(H,\Aut(G)) \leq \frac{p + q - 1}{pq}.
\]
In particular, if $q \geq p$ then $\Pr(H,\Aut(G)) \leq \frac{2p - 1}{p^2} \leq \frac{3}{4}$.
\end{corollary}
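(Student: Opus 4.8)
The plan is to derive this entirely from Theorem \ref{thm3.6} by weakening its bound through control of the two quantities $|X_H|$ and $|L(H,\Aut(G))|$ appearing there. First I would note that since $p$ divides $|\Aut(G)|$ we have $|\Aut(G)| \geq p$, so the subtracted term
\[
\frac{|X_H|(|\Aut(G)| - p)}{p|H||\Aut(G)|}
\]
is non-negative. Discarding it gives the cleaner estimate
\[
\Pr(H,\Aut(G)) \leq \frac{(p-1)|L(H,\Aut(G))| + |H|}{p|H|}.
\]

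Next I would bound $|L(H,\Aut(G))|$. As recorded in the introduction, $L(H,\Aut(G))$ is a normal subgroup of $H$, and by the standing hypothesis of the paper $H \neq L(H,\Aut(G))$, so it is a \emph{proper} subgroup. Hence the index $|H : L(H,\Aut(G))|$ is a divisor of $|H|$ strictly larger than $1$, and therefore at least the smallest prime $q$ dividing $|H|$. This yields $|L(H,\Aut(G))| \leq |H|/q$. Substituting into the displayed bound, the powers of $|H|$ cancel and a short simplification gives
\[
\Pr(H,\Aut(G)) \leq \frac{(p-1)/q + 1}{p} = \frac{p+q-1}{pq},
\]
which is the first assertion.

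For the ``in particular'' part I would write $\frac{p+q-1}{pq} = \frac{1}{p} + \frac{p-1}{pq}$, which is non-increasing in $q$ because $p - 1 \geq 0$; using $q \geq p$ to replace $q$ by $p$ then gives
\[
\Pr(H,\Aut(G)) \leq \frac{1}{p} + \frac{p-1}{p^2} = \frac{2p-1}{p^2}.
\]
Finally, the inequality $\frac{2p-1}{p^2} \leq \frac{3}{4}$ is equivalent to $3p^2 - 8p + 4 \geq 0$, and since $3p^2 - 8p + 4 = (p-2)(3p-2)$ this holds for every prime $p \geq 2$, with equality exactly at $p = 2$.

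The computations are routine, so I expect no serious technical obstacle; the only genuinely load-bearing step is the structural observation that $L(H,\Aut(G))$ is a proper subgroup of $H$. This is what converts the size bound on $|L(H,\Aut(G))|$ into the index estimate $|H : L(H,\Aut(G))| \geq q$. Without invoking the standing assumption $H \neq L(H,\Aut(G))$, the inequality $|L(H,\Aut(G))| \leq |H|/q$ need not hold and the whole argument collapses, so I would be careful to flag that hypothesis explicitly.
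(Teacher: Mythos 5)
Your proposal is correct and follows essentially the same route as the paper: the paper likewise applies Theorem \ref{thm3.6} with the $|X_H|$ term discarded, obtaining $\Pr(H,\Aut(G)) \leq \frac{1}{p}\left(\frac{p-1}{|H : L(H,\Aut(G))|} + 1\right)$, and then uses the standing assumption $H \neq L(H,\Aut(G))$ to get $|H : L(H,\Aut(G))| \geq q$. Your explicit verification of the ``in particular'' clause, including the factorization $3p^2 - 8p + 4 = (p-2)(3p-2)$, fills in routine details the paper leaves to the reader.
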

\begin{proof}
Since $H \ne L(H,\Aut(G))$ we have $|H : L(H,\Aut(G))| \geq q$. Therefore, by Theorem \ref{thm3.6}, we have
\[
\Pr(H,\Aut(G)) \leq \frac{1}{p}\left(\frac{p - 1}{|H : L(H,\Aut(G))|} + 1\right) \leq \frac{p + q - 1}{pq}.
\]
\end{proof}
\noindent Further, if $H$ is a non-abelian subgroup of $G$ then we have the following result.
\begin{corollary}\label{bound_like5/8}
Let $H$ be a non-abelian subgroup of a finite group $G$. If  $p$ and $q$ are the smallest primes dividing  $|\Aut(G)|$ and $|H|$ respectively  then
\[
\Pr(H,\Aut(G)) \leq \frac{q^2 + p - 1}{pq^2}.
\]
In particular, if $q \geq p$ then $\Pr(H,\Aut(G)) \leq \frac{p^2 + p - 1}{p^3} \leq \frac{5}{8}$.
\end{corollary}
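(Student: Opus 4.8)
The plan is to run the same strategy as in Corollary \ref{bound_like3/4}, but to feed it a sharper lower bound on the index $|H : L(H,\Aut(G))|$ that exploits the non-abelianness of $H$. First I would discard the non-positive term in Theorem \ref{thm3.6} (note $|\Aut(G)| \geq p$, so that term is $\geq 0$) to obtain
\[
\Pr(H,\Aut(G)) \leq \frac{(p-1)|L(H,\Aut(G))| + |H|}{p|H|} = \frac{1}{p}\left(\frac{p-1}{|H : L(H,\Aut(G))|} + 1\right).
\]
Since the right-hand side is a decreasing function of the index $|H : L(H,\Aut(G))|$, the whole problem reduces to bounding this index from below. Whereas Corollary \ref{bound_like3/4} used only $|H : L(H,\Aut(G))| \geq q$, here the goal is the stronger estimate $|H : L(H,\Aut(G))| \geq q^2$.

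To establish this I would first recall that $L(H,\Aut(G)) \subseteq H \cap Z(G) \subseteq Z(H)$, so the quotient $H/L(H,\Aut(G))$ surjects onto $H/Z(H)$. If $H/L(H,\Aut(G))$ were cyclic, then so would its homomorphic image $H/Z(H)$ be, which by the classical fact that $H/Z(H)$ cyclic forces $H$ abelian would contradict the hypothesis that $H$ is non-abelian. Hence $H/L(H,\Aut(G))$ is non-cyclic, and in particular its order is neither $1$ nor prime, i.e. it is composite. Every prime dividing $|H : L(H,\Aut(G))|$ also divides $|H|$ and is therefore at least $q$, so $|H : L(H,\Aut(G))|$ is a product of at least two primes each $\geq q$, whence $|H : L(H,\Aut(G))| \geq q^2$. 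Substituting into the displayed bound gives $\Pr(H,\Aut(G)) \leq \frac{1}{p}\left(\frac{p-1}{q^2} + 1\right) = \frac{q^2 + p - 1}{pq^2}$.

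For the final assertion, writing the bound as $\frac{1}{p} + \frac{p-1}{pq^2}$ shows it is decreasing in $q$, so under the assumption $q \geq p$ it is largest at $q = p$, giving $\frac{p^2 + p - 1}{p^3}$. Viewing $\frac{p^2+p-1}{p^3} = \frac{1}{p} + \frac{1}{p^2} - \frac{1}{p^3}$ as a function of the prime $p \geq 2$, a short derivative computation shows it is decreasing, so its maximum over $p \geq 2$ is attained at $p = 2$, where it equals $\frac{5}{8}$. The only genuinely non-routine step is the index bound $|H : L(H,\Aut(G))| \geq q^2$, resting on the inclusion $L(H,\Aut(G)) \subseteq Z(H)$ together with the non-cyclicity argument; the remainder is monotonicity bookkeeping.
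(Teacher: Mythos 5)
Your proposal is correct and takes essentially the same route as the paper: drop the nonnegative $X_H$-term in Theorem \ref{thm3.6} and feed in the index bound $|H : L(H,\Aut(G))| \geq q^2$, then specialize $q = p$ and $p = 2$ by monotonicity. The only difference is that the paper asserts $|H : L(H,\Aut(G))| \geq q^2$ without proof, whereas you supply the standard justification via $L(H,\Aut(G)) \subseteq H \cap Z(G) \subseteq Z(H)$ and the fact that $H/Z(H)$ cyclic would force $H$ abelian.
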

\begin{proof}
Since $H$ is non-abelian we have $|H : L(H,\Aut(G))| \geq q^2$. Therefore, by Theorem \ref{thm3.6}, we have
\[
\Pr(H,\Aut(G)) \leq \frac{1}{p}\left(\frac{p - 1}{|H : L(H,\Aut(G))|} + 1\right) \leq \frac{q^2 + p - 1}{pq^2}.
\]
\end{proof}

Now we obtain some characterizations of a subgroup $H$ of a finite group $G$ if equality holds in Corollary \ref{bound_like3/4} and Corollary \ref{bound_like5/8}. 

\begin{theorem}\label{prop4.1}
Let $H$ be a subgroup of a finite group $G$. If $p$ and $q$ are two primes such that  $\Pr(H,\Aut(G)) = \frac {p + q -1}{pq}$ then  $pq$  divides $|H||\Aut(G)|$. Further, if $p$ and $q$ are the smallest primes dividing $|\Aut(G)|$ and $|H|$ respectively then
\[
\frac{H}{L(H,\Aut(G))} \cong \mathbb Z_q.
\]
\end{theorem}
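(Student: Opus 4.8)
The plan is to treat the two assertions separately, grounding both in the counting identity \eqref{formula2}.

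For the first assertion I would set $M := \sum_{x\in H}|C_{\Aut(G)}(x)|$ and $N := |H||\Aut(G)|$, so that \eqref{formula2} reads $M = N\,\Pr(H,\Aut(G))$ with $M$ a positive integer. Substituting the hypothesis $\Pr(H,\Aut(G)) = \frac{p+q-1}{pq}$ and clearing denominators reduces everything to the single relation
\[
pq\,M = (p+q-1)\,N .
\]
From here $pq \mid N$ is purely a divisibility question. When $p = q$ it is immediate, since $\gcd(2p-1,p^2)=1$ forces $p^2 \mid N$. When $p \neq q$ I would argue prime by prime: reducing the relation modulo $p$ and modulo $q$ gives $p \mid (p+q-1)N$ and $q \mid (p+q-1)N$, and I would combine this with the extra structure of $M$ — namely that $M$ is a multiple of $|\Aut(G)|$, which the orbit description \eqref{grpdefn} supplies — to force both $p \mid N$ and $q \mid N$, whence $pq \mid N$.

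For the second assertion I would run the equality analysis underlying Corollary \ref{bound_like3/4}. Writing $L := L(H,\Aut(G))$, the hypothesis that $p,q$ are the smallest primes dividing $|\Aut(G)|$ and $|H|$ places $\Pr(H,\Aut(G))$ at the top of the chain
\[
\frac{p+q-1}{pq} = \Pr(H,\Aut(G)) \leq \frac{1}{p}\Big(1 + \frac{p-1}{|H:L|}\Big) - \frac{|X_H|(|\Aut(G)|-p)}{p|H||\Aut(G)|} \leq \frac{1}{p}\Big(1 + \frac{p-1}{q}\Big) = \frac{p+q-1}{pq},
\]
where the first inequality is Theorem \ref{thm3.6} and the second uses $|H:L| \geq q$ (valid since $H \neq L$ makes $|H:L|$ a nontrivial divisor of $|H|$, hence at least the smallest prime $q$). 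Every inequality must then be an equality; in particular the $X_H$-correction vanishes and $\frac{p-1}{|H:L|} = \frac{p-1}{q}$. As $p\geq 2$ gives $p-1\geq 1$, this forces $|H:L| = q$. Since $L \trianglelefteq H$ and the index is the prime $q$, the quotient $H/L$ has order $q$ and is therefore cyclic, so $H/L(H,\Aut(G)) \cong \mathbb{Z}_q$.

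The routine part is the second assertion, which is essentially bookkeeping on when the bound of Theorem \ref{thm3.6} is attained. The main obstacle is the number theory in the first assertion: the naive ``lowest terms'' reading of $pq\,M = (p+q-1)N$ yields only $\tfrac{pq}{\gcd(p+q-1,\,pq)} \mid N$, and $\gcd(p+q-1,pq)$ can exceed $1$ precisely when $p \mid q-1$ or $q \mid p-1$. Resolving those cases is exactly where the argument must exploit more than the mere integrality of $M$ — either the divisibility $|\Aut(G)| \mid M$ coming from \eqref{grpdefn} or a direct orbit-counting argument — to recover the full conclusion $pq \mid N$.
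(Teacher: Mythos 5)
Your treatment of the second assertion is correct and is essentially the paper's own argument: the paper likewise feeds $\Pr(H,\Aut(G)) = \frac{p+q-1}{pq}$ into Theorem \ref{thm3.6} (in the weakened form with the $X_H$-term dropped), deduces $|H : L(H,\Aut(G))| \leq q$, and then uses the standing assumption $H \neq L(H,\Aut(G))$ together with Lagrange and the minimality of $q$ to pin the index at exactly $q$. Your equality-chain phrasing and the observation that the $X_H$-correction must vanish are cosmetic differences only.

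The gap is in the first assertion. With your notation $M = \sum_{x\in H}|C_{\Aut(G)}(x)|$ and $N = |H||\Aut(G)|$, you correctly sensed that the relation $pqM = (p+q-1)N$ by itself yields only $\frac{pq}{\gcd(pq,\,p+q-1)} \mid N$, and that the gcd is nontrivial exactly when $p \mid q-1$ or $q \mid p-1$; in fact the paper's own proof is nothing more than the one-line naive deduction you distrust, so on this point your diagnosis is sharper than the paper. But the repair you propose rests on a false statement: $|\Aut(G)|$ does \emph{not} in general divide $M$. That divisibility would follow if $H$ were a union of $\Aut(G)$-orbits (e.g. $H$ characteristic in $G$), but an orbit $\orb(x)$ of $x \in H$ need not lie inside $H$, and for the same reason the second equality in \eqref{grpdefn}, which you invoke as the source of the divisibility, is itself not valid for arbitrary $H$ (it is \cite[Proposition 2]{aK16} only in the case $H = G$). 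A concrete counterexample: take $G = \Z_2 \times \Z_2$ and $H$ a subgroup of order $2$, so $\Aut(G) \cong S_3$ of order $6$; then $M = 6 + 2 = 8$, which is not a multiple of $6$. This example is not incidental to your worry: there $\Pr(H,\Aut(G)) = \frac{8}{12} = \frac{2}{3} = \frac{2+3-1}{2\cdot 3}$, precisely the value at stake for $(p,q) = (2,3)$, the smallest case in which the gcd is nontrivial. (The conclusion $6 \mid N = 12$ does hold there, but not for your reason; for $(2,3)$ one can instead argue by parity: if $N$ were odd then $|H|$ and $|\Aut(G)|$ would both be odd, every $|C_{\Aut(G)}(x)|$ would be an odd divisor of $|\Aut(G)|$, and $M$ would be an odd sum of odd terms, hence odd, contradicting $2N = 3M$.) Finally, even if your lemma were true, your sketch would not close the argument: writing $M = |\Aut(G)|k$ merely converts the relation into $pq \mid (p+q-1)|H|$, where the identical gcd obstruction reappears. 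So the first part of your proposal is a plan whose pivotal step fails, and the exceptional cases (for instance $p = 2$ with $q \equiv 1 \pmod 4$) remain genuinely open in your write-up — as, for that matter, they are in the paper's.
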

\begin{proof}
By \eqref{GenAutComDeg}, we have $(p + q -1)|H||\Aut(G)| = pq|\{(x,\alpha)\in H\times \Aut(G): \alpha(x) = x\}|$. Therefore, $pq$ divides $|H||\Aut(G)|$.

If $p$ and $q$ are the smallest primes dividing $|\Aut(G)|$ and $|H|$ respectively then, by Theorem \ref{thm3.6}, we have
\[
\frac{p + q -1}{pq} \leq \frac{1}{p}\left(\frac{p - 1}{|H : L(H,\Aut(G))|} + 1\right)
\]
which gives $|H : L(H,\Aut(G))| \leq q$. Hence, $\frac{H}{L(H,\Aut(G))} \cong \mathbb Z_q$.
\end{proof}
\noindent It is worth mentioning here that Theorem \ref{prop4.1} is a generalization of \cite[Theorem 2.4]{moga}.

\begin{theorem}\label{prop4.2}
Let $H$ be a  non-abelian subgroup of a finite group $G$. If $p$ and $q$ are two primes such that $\Pr(H,\Aut(G)) = \frac {q^2 + p - 1}{pq^2}$ then $pq$ divides  $|H||\Aut(G)|$. Further, if $p$ and $q$ are the smallest primes dividing $|\Aut(G)|$ and $|H|$ respectively then
\[
\frac{H}{L(H,\Aut(G))}\cong\mathbb Z_q \times \mathbb Z_q.
\]
In particular, if $H$ and $\Aut (G)$ are of even order and $\Pr(H,\Aut(G)) = \frac{5}{8}$ then $\frac{H}{L(H,\Aut(G))} \cong \mathbb Z_2 \times \mathbb Z_2$.
\end{theorem}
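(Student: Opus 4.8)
The plan is to mirror the two-step structure of the proof of Theorem \ref{prop4.1}, replacing the value $\frac{p+q-1}{pq}$ by $\frac{q^2+p-1}{pq^2}$ and inserting one additional group-theoretic argument at the end to pin down the isomorphism type of the quotient. For the divisibility assertion I would begin from \eqref{GenAutComDeg}: substituting $\Pr(H,\Aut(G)) = \frac{q^2+p-1}{pq^2}$ and clearing denominators gives
\[
(q^2+p-1)|H||\Aut(G)| = pq^2\bigl|\{(x,\alpha)\in H\times\Aut(G):\alpha(x)=x\}\bigr|.
\]
The right-hand side is a multiple of $pq$, so $pq$ divides $|H||\Aut(G)|$, exactly as in the first part of Theorem \ref{prop4.1}.

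For the structural statement, take $p$ and $q$ to be the smallest primes dividing $|\Aut(G)|$ and $|H|$. Substituting the hypothesis into the bound of Theorem \ref{thm3.6} yields
\[
\frac{q^2+p-1}{pq^2} \leq \frac{1}{p}\left(\frac{p-1}{|H:L(H,\Aut(G))|} + 1\right).
\]
Subtracting $\frac1p$ from both sides reduces the left-hand side to $\frac{p-1}{pq^2}$, and cancelling the positive common factor $\frac{p-1}{p}$ leaves $|H:L(H,\Aut(G))| \leq q^2$. Combined with the lower bound $|H:L(H,\Aut(G))| \geq q^2$ established in the proof of Corollary \ref{bound_like5/8} for non-abelian $H$, this forces $|H:L(H,\Aut(G))| = q^2$.

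It then remains only to identify a group of order $q^2$, necessarily isomorphic to $\mathbb{Z}_{q^2}$ or $\mathbb{Z}_q\times\mathbb{Z}_q$. The crux, and the one ingredient genuinely beyond Theorem \ref{prop4.1}, is to exclude the cyclic possibility. I would use that $L(H,\Aut(G)) \subseteq H\cap Z(G) \subseteq Z(H)$, recorded in the introduction, so that $L(H,\Aut(G))$ is central in $H$. Were $H/L(H,\Aut(G))$ cyclic, picking a coset generator together with the centrality of $L(H,\Aut(G))$ would make any two elements of $H$ commute --- the usual ``$H/Z(H)$ cyclic $\Rightarrow$ $H$ abelian'' device routed through the central subgroup $L(H,\Aut(G))$ --- contradicting that $H$ is non-abelian. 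Hence $H/L(H,\Aut(G)) \cong \mathbb{Z}_q\times\mathbb{Z}_q$. Finally, the particular case drops out by setting $p=q=2$, since then $\frac{q^2+p-1}{pq^2} = \frac58$ and $\mathbb{Z}_q\times\mathbb{Z}_q = \mathbb{Z}_2\times\mathbb{Z}_2$; the assumption that $H$ and $\Aut(G)$ have even order guarantees that $2$ is indeed the relevant smallest prime in each case.
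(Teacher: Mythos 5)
Your proposal is correct and follows essentially the same route as the paper: the divisibility claim via \eqref{GenAutComDeg}, then Theorem \ref{thm3.6} to get $|H : L(H,\Aut(G))| \leq q^2$, and the non-abelian hypothesis to settle the quotient. In fact you are slightly more careful than the paper, which merely asserts ``$|H : L(H,\Aut(G))| \neq 1, q$'' and concludes; your explicit appeal to the lower bound $|H : L(H,\Aut(G))| \geq q^2$ from Corollary \ref{bound_like5/8} and your exclusion of the cyclic case $\mathbb{Z}_{q^2}$ via the centrality of $L(H,\Aut(G))$ in $H$ fill in exactly the details the paper leaves implicit.
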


\begin{proof}
By \eqref{GenAutComDeg}, we have $(q^2 + p -1)|H||\Aut(G)| = pq^2|\{(x,\alpha)\in H\times \Aut(G): \alpha(x) = x\}|$. Therefore, $pq$ divides $|H||\Aut(G)|$.

If $p$ and $q$ are the smallest primes dividing $|\Aut(G)|$ and $|H|$ respectively then, by Theorem \ref{thm3.6}, we have
\[
\frac{q^2 + p -1}{pq^2} \leq \frac{1}{p}\left(\frac{p - 1}{|H : L(H,\Aut(G))|} + 1\right)
\]
which gives $|H : L(H,\Aut(G))| \leq q^2$. Since $H$ is non-abelian, $|H : L(H,\Aut(G))| \neq 1, q$. Hence, $\frac{H}{L(H,\Aut(G))} \cong \mathbb Z_q \times \mathbb Z_q$.
\end{proof}

%The converse of the Proposition \ref{prop4.1} and \ref{prop4.2} are not true in general. However the from the following Proposition converse can be obtained.

\noindent The following  result gives partial converses of Theorems \ref{prop4.1} and \ref{prop4.2} respectively.

\begin{proposition}
Let $H$ be a subgroup of a finite group $G$. Let $p, q$ be the smallest prime divisors of $|\Aut(G)|$, $|H|$ respectively and $|\Aut(G) : C_{\Aut(G)}(x)| = p$ for all $x \in H \setminus L(H,\Aut(G))$.
\begin{enumerate}
\item
If $\frac{H}{L(H,\Aut(G))} \cong \mathbb Z_q$ then $\Pr(H,\Aut(G)) = \frac {p + q - 1}{pq}$.
\item
If $\frac{H}{L(H,\Aut(G))} \cong\mathbb Z_q \times \mathbb Z_q$  then $\Pr(H,\Aut(G)) = \frac {q^2 + p - 1}{pq^2}$.
\end{enumerate}
\end{proposition}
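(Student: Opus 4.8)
The plan is to compute $\Pr(H,\Aut(G))$ directly from the counting identity \eqref{formula2}, $|H||\Aut(G)|\Pr(H,\Aut(G)) = \sum_{x\in H}|C_{\Aut(G)}(x)|$, by splitting the index set $H$ according to membership in $L(H,\Aut(G))$. Write $L := L(H,\Aut(G))$; since $L$ is a subgroup of $H$ we have the disjoint decomposition $H = L \sqcup (H\setminus L)$. First I would record the two possible values of $|C_{\Aut(G)}(x)|$: for $x\in L$ every automorphism fixes $x$, so $C_{\Aut(G)}(x)=\Aut(G)$ and $|C_{\Aut(G)}(x)|=|\Aut(G)|$; for $x\in H\setminus L$ the standing hypothesis $|\Aut(G):C_{\Aut(G)}(x)|=p$ gives $|C_{\Aut(G)}(x)|=|\Aut(G)|/p$. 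Summing over the two pieces then yields
\[
\sum_{x\in H}|C_{\Aut(G)}(x)| = |L|\,|\Aut(G)| + (|H|-|L|)\,\frac{|\Aut(G)|}{p}.
\]

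This is the point at which the hypotheses on the quotient enter, since each fixes the index $|H:L|$ and hence both cardinalities on the right. For part (a), $H/L\cong\mathbb Z_q$ gives $|H:L|=q$, so $|H|=q|L|$ and $|H|-|L|=(q-1)|L|$; factoring $|L||\Aut(G)|$ out of the displayed sum gives $|L||\Aut(G)|\bigl(1+\tfrac{q-1}{p}\bigr)=|L||\Aut(G)|\cdot\tfrac{p+q-1}{p}$, and dividing by $|H||\Aut(G)|=q|L||\Aut(G)|$ produces $\Pr(H,\Aut(G))=\frac{p+q-1}{pq}$. For part (b), $H/L\cong\mathbb Z_q\times\mathbb Z_q$ gives $|H:L|=q^2$ and $|H|-|L|=(q^2-1)|L|$; the identical manipulation replaces $q-1$ by $q^2-1$, leaving $\tfrac{p+q^2-1}{p}$ in the numerator, and division by $q^2|L||\Aut(G)|$ yields $\Pr(H,\Aut(G))=\frac{q^2+p-1}{pq^2}$.

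I expect no real obstacle here: once \eqref{formula2} and the uniform-index hypothesis are in place, both statements reduce to the same two-case arithmetic, differing only in whether $|H:L|$ equals $q$ or $q^2$. The only facts genuinely used are that elements of $L$ have full centralizer, that the given quotient isomorphisms pin down $|H:L|$, and that every $x$ outside $L$ contributes exactly $|\Aut(G)|/p$; in particular the computation never appeals to $p,q$ being the \emph{smallest} prime divisors, that refinement serving only to make this a clean converse of Theorems \ref{prop4.1} and \ref{prop4.2}.
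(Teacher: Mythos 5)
Your proof is correct and follows essentially the same route as the paper: both split the sum $\sum_{x\in H}|C_{\Aut(G)}(x)|$ in \eqref{formula2} over $L(H,\Aut(G))$ and its complement, use the uniform-index hypothesis to get $|C_{\Aut(G)}(x)|=|\Aut(G)|/p$ off $L$, and then substitute $|H:L(H,\Aut(G))|=q$ or $q^2$ (the paper packages this as the intermediate identity \eqref{conv_eq}, which is just your computation with $|H:L(H,\Aut(G))|$ left symbolic). Your closing observation that the ``smallest prime'' hypothesis is never used in the computation is also accurate --- the paper's proof does not invoke it either.
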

\begin{proof}
Since $|\Aut(G) : C_{\Aut(G)}(x)| = p$ for all $x \in H \setminus L(H,\Aut(G))$ we have $|C_{\Aut(G)}(x)| = \frac{|\Aut(G)|}{p}$ for all $x \in H \setminus L(H,\Aut(G))$. Therefore, by \eqref{formula2}, we have

\begin{align*}
\Pr(H,\Aut(G)) &= \frac{|L(H,\Aut(G))|}{|H|} + \frac {1}{|H||\Aut(G)|} \underset{x \in H \setminus L(H,\Aut(G))}{\sum}|C_{\Aut(G)}(x)|\\
&= \frac {|L(H,\Aut(G))|}{|H|} + \frac{|H| - |L(H,\Aut(G))|}{p|H|}.
\end{align*}
Thus
\begin{equation}\label{conv_eq}
\Pr(H,\Aut(G)) = \frac{1}{p}\left(\frac{p - 1}{|H : L(H,\Aut(G))|}  + 1\right).
\end{equation}
Hence, the results follow from \eqref{conv_eq}. 
%(a) If $\frac{H}{L(H,\Aut(G))} \cong \mathbb Z_q$ then \eqref{conv_eq} gives $\Pr(H,\Aut(G)) = \frac {p + q - 1}{pq}$.
%\noindent (b) If $\frac{H}{L(H,\Aut(G))} \cong\mathbb Z_q \times \mathbb Z_q$  then \eqref{conv_eq} gives $\Pr(H,\Aut(G)) = \frac {q^2 + p - 1}{pq^2}$.
\end{proof}

Note that if we replace $\Aut(G)$ by $\Inn(G)$, the inner automorphism group of $G$, then from  \eqref{GenAutComDeg}, we have  $\Pr(H,\Inn(G)) = \Pr(H,G)$ where
\[
\Pr(H,G) = \frac{|\{(x, y) \in H \times G : xy = yx\}|}{|H||G|}.
\]
Various properties of the ratio $\Pr(H,G)$ are studied in \cite{erl} and \cite{nY15}. We conclude this section showing that  ${\Pr}(H,\Aut(G))$ is bounded by  ${\Pr}(H,G)$.
\begin{proposition}
Let $H$ be a subgroup of a finite group $G$. Then
\[
\Pr(H,\Aut(G)) \leq \Pr(H,G).
\]
\end{proposition}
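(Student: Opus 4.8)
The plan is to reduce everything to a comparison of orbit sizes, using the identity $\Pr(H,G) = \Pr(H,\Inn(G))$ recorded just above the statement together with the orbit formula \eqref{grpdefn}. Since $\Inn(G)$ is a subgroup of $\Aut(G)$, each $\Inn(G)$-orbit (that is, each conjugacy class) sits inside the corresponding $\Aut(G)$-orbit; passing to reciprocals of the orbit sizes then reverses the inequality in exactly the direction we need, and summing over $H$ finishes the proof.

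First I would record the $\Inn(G)$-analogue of \eqref{grpdefn}. Writing $\orb_{\Inn(G)}(x) := \{\beta(x) : \beta \in \Inn(G)\}$ for $x \in G$ and applying \eqref{formula2} and the orbit-stabilizer theorem to the action of $\Inn(G)$ on $G$ exactly as in the derivation of \eqref{grpdefn}, one obtains $|\orb_{\Inn(G)}(x)| = |\Inn(G)|/|C_{\Inn(G)}(x)|$ and hence
\[
\Pr(H,\Inn(G)) = \frac{1}{|H|}\sum_{x \in H}\frac{1}{|\orb_{\Inn(G)}(x)|}.
\]
Combined with $\Pr(H,\Inn(G)) = \Pr(H,G)$, this expresses $\Pr(H,G)$ in terms of inner orbit sizes in a form parallel to \eqref{grpdefn}.

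The key step is the orbit comparison. For each $x \in H$, every element of $\orb_{\Inn(G)}(x)$ has the form $\beta(x)$ for some inner automorphism $\beta$, and since $\Inn(G) \subseteq \Aut(G)$ we have $\beta \in \Aut(G)$; hence $\orb_{\Inn(G)}(x) \subseteq \orb(x)$, where $\orb(x)$ is the $\Aut(G)$-orbit of \eqref{grpdefn}. Therefore $|\orb_{\Inn(G)}(x)| \leq |\orb(x)|$, which gives
\[
\frac{1}{|\orb(x)|} \leq \frac{1}{|\orb_{\Inn(G)}(x)|} \quad \text{for every } x \in H.
\]
Summing this over $x \in H$ and dividing by $|H|$, the left-hand side is $\Pr(H,\Aut(G))$ by \eqref{grpdefn} and the right-hand side is $\Pr(H,\Inn(G)) = \Pr(H,G)$, so the claimed bound follows.

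I expect no serious obstacle here: the only point requiring care is setting up the orbit-stabilizer bookkeeping for $\Inn(G)$ correctly, so that the $\Inn(G)$-version of \eqref{grpdefn} is available, after which the containment $\orb_{\Inn(G)}(x) \subseteq \orb(x)$ does all the work. Since this containment can be an equality (for instance when $\Aut(G)$ and $\Inn(G)$ induce the same orbits on $H$), I would present the result only as a weak inequality and not attempt to characterise the equality case.
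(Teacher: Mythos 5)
Your proof is correct and follows essentially the same route as the paper: the paper invokes \cite[Lemma 1]{nY15} to write $\Pr(H,G) = \frac{1}{|H|}\sum_{x\in H}\frac{1}{|\cl_G(x)|}$ and then uses the containment $\cl_G(x) \subseteq \orb(x)$ together with \eqref{grpdefn}, which is exactly your orbit-comparison argument, since $\cl_G(x)$ is precisely your $\orb_{\Inn(G)}(x)$. The only difference is cosmetic: you re-derive the inner-orbit formula from orbit--stabilizer counting rather than citing the lemma, which makes the argument self-contained but otherwise identical.
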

\begin{proof}
%Consider the action of $\Aut(G)$ on $G$ given by $(\alpha,x)\mapsto \alpha(x)$  where $\alpha \in \Aut(G)$ and $x\in G$.  Let $\orb(x) := \{\alpha(x) : \alpha \in \Aut(G)\}$ be the orbit of $x \in G$. Then by orbit-stabilizer theorem, we have
%\begin{equation}\label{orbit-stabilizer-Thm}
%$
%|\orb(x)| = |\Aut(G)|/|C_{\Aut(G)}(x)|
%$
%\end{equation}
%and hence, \eqref{formula2} gives
%\begin{equation}\label{grpdefn}
%  {\Pr}(H,\Aut(G))  =   \frac {1}{|H|}\underset{x\in H}{\sum}\frac{1}{|\orb(x)|}.
%\end{equation}
By  \cite[Lemma 1]{nY15}, we have
\begin{equation}\label{newbound01}
\Pr(H, G) =   \frac {1}{|H|}\underset{x\in H}{\sum}\frac {1}{|\cl_G(x)|}
\end{equation}
where $\cl_G(x) = \{\alpha(x) : \alpha \in \Inn(G)\}$. Since $\cl_G(x) \subseteq \orb(x)$ for all $x \in H$, the result follows from \eqref{grpdefn} and \eqref{newbound01}.
\end{proof}

\section{Some lower bounds}
We begin with the following lower bound.
\begin{theorem}\label{newthm3.6}
Let $H$ be a subgroup of a finite group $G$ and $p$ the smallest prime dividing $|\Aut(G)|$.  Then
\[
\Pr(H,\Aut(G)) \geq \frac {|L(H,\Aut(G))|}{|H|} + \frac {p(|H| - |X_H| - |L(H,\Aut(G))|) + |X_H|}{|H||\Aut(G)|},
\]
where $X_H = \{x \in H : C_{\Aut(G)}(x) = \{I\}\}$ and $I$ is the identity automorphism of $G$.
\end{theorem}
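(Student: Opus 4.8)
The plan is to mirror the proof of Theorem \ref{thm3.6}, replacing the upper estimate on each $|C_{\Aut(G)}(x)|$ by a matching lower estimate. First I would partition $H$ into the three pairwise disjoint sets $X_H$, $L(H,\Aut(G))$, and the remainder $H\setminus(X_H\cup L(H,\Aut(G)))$; disjointness of the first two is exactly the observation $X_H\cap L(H,\Aut(G)) = \phi$ already used in Theorem \ref{thm3.6}. On $X_H$ each centralizer is trivial, contributing $|X_H|$ to the sum $\sum_{x\in H}|C_{\Aut(G)}(x)|$; on $L(H,\Aut(G))$ each element is fixed by every automorphism, so each centralizer equals $\Aut(G)$ and contributes $|\Aut(G)||L(H,\Aut(G))|$.

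The crucial step is bounding $|C_{\Aut(G)}(x)|$ from below on the remainder. For such $x$ the condition $x\notin X_H$ forces $C_{\Aut(G)}(x)\neq\{I\}$, so this is a subgroup of $\Aut(G)$ of order strictly greater than $1$. By Lagrange's theorem its order divides $|\Aut(G)|$, and the smallest divisor of $|\Aut(G)|$ exceeding $1$ is precisely $p$; hence $|C_{\Aut(G)}(x)|\geq p$ throughout the remainder set. This is the exact dual of the inequality $|C_{\Aut(G)}(x)|\leq |\Aut(G)|/p$ used in Theorem \ref{thm3.6}, and it is where the hypothesis that $p$ is the smallest prime dividing $|\Aut(G)|$ enters.

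Summing the three contributions then gives
\[
\sum_{x\in H}|C_{\Aut(G)}(x)|\geq |X_H| + |\Aut(G)||L(H,\Aut(G))| + p\bigl(|H| - |X_H| - |L(H,\Aut(G))|\bigr),
\]
and dividing through by $|H||\Aut(G)|$ via \eqref{formula2}, then isolating the term $|\Aut(G)||L(H,\Aut(G))|/(|H||\Aut(G)|) = |L(H,\Aut(G))|/|H|$, reproduces the claimed inequality verbatim.

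I do not anticipate any serious obstacle, since the argument is essentially a sign-flip of Theorem \ref{thm3.6}. The single point that requires genuine care is the lower estimate $|C_{\Aut(G)}(x)|\geq p$: it must be justified by the divisibility argument above rather than by the weaker observation $|C_{\Aut(G)}(x)|\geq 2$, because using $2$ in place of $p$ would yield a strictly weaker bound and would not recover the stated expression.
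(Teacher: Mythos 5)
Your proposal is correct and follows essentially the same route as the paper: the same three-way partition of $H$ into $X_H$, $L(H,\Aut(G))$, and the remainder, the same lower bound $|C_{\Aut(G)}(x)|\geq p$ for the remainder (justified via Lagrange, which the paper leaves implicit in the step $\{I\}\lneq C_{\Aut(G)}(x)$), and the same conclusion via \eqref{formula2}. Your closing remark correctly identifies the one point of care, so there is nothing to add.
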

\begin{proof}
We have   $X_H \cap L(H,\Aut(G)) = \phi$. Therefore
\begin{align*}
\underset{x\in H}{\sum}|C_{\Aut(G)}(x)| = & \, |X_H| + |\Aut(G)||L(H,\Aut(G))| \\
& + \underset{x\in H\setminus (X_H\cup L(H,\Aut(G)))}{\sum}|C_{\Aut(G)}(x)|.
\end{align*}

For $x  \in H \setminus (X_H \cup L(H,\Aut(G)))$ we have $\{I\}\lneq C_{\Aut(G)}(x)$ which implies $|C_{\Aut(G)}(x)| \geq p$. Therefore
\begin{align}\label{our_bound1}
\underset{x\in H}{\sum}|C_{\Aut(G)}(x)| \geq & |X_H| + |\Aut(G)||L(H,\Aut(G))|\nonumber \\
& + p(|H| - |X_H| - |L(H,\Aut(G))|)
\end{align}
Hence, the result follows from  \eqref{formula2} and \eqref{our_bound1}.
\end{proof}

Now we obtain two lower bounds analogous to the lower bounds obtained in  \cite[Theorem A]{nY15} and \cite[Theorem 1]{ND10}.
\begin{theorem}\label{prop3.8}
Let $H$ be a subgroup of a finite group $G$.  Then
\[
\Pr(H,\Aut(G)) \geq \frac {1}{|S(H,\Aut(G))|}\left(1 + \frac {|S(H,\Aut(G))| - 1}{|H : L(H,\Aut(G))|}\right).
\]
The equality holds if and only if $\orb(x) = xS(H,\Aut(G))\,
 \forall \, x \in H \setminus L(H,\Aut(G))$.
% where $\orb(x) := \{\alpha(x) : \alpha \in \Aut(G)\}$.
\end{theorem}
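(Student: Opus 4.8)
The plan is to read $\Pr(H,\Aut(G))$ off the orbit formula \eqref{grpdefn} and to bound every orbit size uniformly by $|S(H,\Aut(G))|$. First I would record the translation identity $\alpha(x) = x[x,\alpha]$ for $x \in H$ and $\alpha \in \Aut(G)$. Since $x \in H$, the autocommutator $[x,\alpha]$ lies in $S(H,\Aut(G))$, so $\alpha(x) \in xS(H,\Aut(G))$ and therefore
\[
\orb(x) \subseteq xS(H,\Aut(G)) \quad \text{for every } x \in H.
\]
Left translation by $x$ is a bijection of $G$, so $|xS(H,\Aut(G))| = |S(H,\Aut(G))|$, and hence $|\orb(x)| \le |S(H,\Aut(G))|$ for all $x \in H$.

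Next I would split the sum in \eqref{grpdefn} over the two sets $L(H,\Aut(G))$ and $H \setminus L(H,\Aut(G))$. Each $x \in L(H,\Aut(G))$ has $\orb(x) = \{x\}$, contributing $1$ to the sum, which accounts for $|L(H,\Aut(G))|$ terms; for each of the remaining $|H| - |L(H,\Aut(G))|$ elements the bound $|\orb(x)| \le |S(H,\Aut(G))|$ gives a contribution of at least $1/|S(H,\Aut(G))|$. Collecting these,
\[
\sum_{x\in H}\frac{1}{|\orb(x)|} \ge |L(H,\Aut(G))| + \frac{|H| - |L(H,\Aut(G))|}{|S(H,\Aut(G))|}.
\]
Dividing by $|H|$, writing $|L(H,\Aut(G))|/|H| = 1/|H:L(H,\Aut(G))|$, and rearranging, the right-hand side becomes $\frac{1}{|S(H,\Aut(G))|}\bigl(1 + \frac{|S(H,\Aut(G))| - 1}{|H:L(H,\Aut(G))|}\bigr)$, which is the asserted bound.

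For the equality clause I would note that the only inequality used is $|\orb(x)| \le |S(H,\Aut(G))|$ for $x \in H \setminus L(H,\Aut(G))$, while the $L(H,\Aut(G))$-terms are exact. Hence equality holds if and only if $|\orb(x)| = |S(H,\Aut(G))|$ for every $x \in H \setminus L(H,\Aut(G))$; and since $\orb(x) \subseteq xS(H,\Aut(G))$, two finite sets of equal cardinality with one contained in the other must coincide, so this is exactly the condition $\orb(x) = xS(H,\Aut(G))$ for all such $x$. The computation is routine, and I expect the mild obstacle to be twofold: verifying that $[x,\alpha] \in S(H,\Aut(G))$ genuinely requires $x \in H$ (this is what makes $xS(H,\Aut(G))$ a single uniform upper bound for all the orbits), and checking in the equality analysis that no slack is concealed in the $L(H,\Aut(G))$-terms.
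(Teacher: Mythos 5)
Your proposal is correct and follows essentially the same route as the paper: both use the orbit formula \eqref{grpdefn}, the inclusion $\orb(x) \subseteq xS(H,\Aut(G))$ coming from $\alpha(x) = x[x,\alpha]$, and the split of the sum over $L(H,\Aut(G))$ and its complement. If anything, you are more thorough than the paper, which ends with ``Hence, the result follows'' and leaves the equality clause implicit, whereas you correctly pin it down via the observation that containment plus equal cardinality forces $\orb(x) = xS(H,\Aut(G))$.
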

\begin{proof}
For all $x \in H \setminus L(H,\Aut(G))$ we have $\alpha (x) = x[x, \alpha] \in xS(H,\Aut(G))$. Therefore $\orb(x) \subseteq xS(H,\Aut(G))$ and so $|\orb(x)| \leq |S(H,\Aut(G))|$
 for all $x \in H \setminus L(H,\Aut(G))$. Now, by  \eqref{grpdefn}, we have
\begin{align*}
\Pr(H,\Aut(G))& = \frac {1}{|H|}\left(\underset{x \in L(H,\Aut(G))}{\sum}\frac {1}{|\orb(x)|} + \underset{x \in H \setminus L(H,\Aut(G))}{\sum}\frac {1}{|\orb(x)|}\right)\\
&\geq \frac {|L(H,\Aut(G))|}{|H|} + \frac {1}{|H|}\underset{x\in H\setminus L(H,\Aut(G))}{\sum}\frac{1}{|S(H,\Aut(G))|}.
%&=\frac {1}{|S(G,\Aut(G))|}\left(1+\frac {|S(G,\Aut(G))|-1}%{|G:L(H,\Aut(G))|}\right)
\end{align*}
Hence, the result follows.
\end{proof}
%The following lemma is useful in obtaining the next corollary.
%\noindent As a corollary to Theorem \ref{prop3.8} we have the following generalization of  \cite[Equation (3)]{aK16}.

\begin{corollary}\label{lastcor}
Let $H$ be a subgroup of a finite group $G$. Then
\[
{\Pr}(H,\Aut(G))\geq \frac {1}{|[H,\Aut(G)]|}\left(1 + \frac {|[H,\Aut(G)]| - 1}{|H : L(H,\Aut(G))|}\right).
\]
%If $H \ne L(H,\Aut(G))$ then 
%with equality  if and only if $[H,\Aut(G)] =  S(H,\Aut(G))$ and $\orb(x)= x[H,\Aut(G)]$
% for all $x \in H \setminus L(H,\Aut(G))$.
\end{corollary}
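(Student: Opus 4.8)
The plan is to obtain this corollary directly from Theorem \ref{prop3.8} by comparing the two cardinalities $|S(H,\Aut(G))|$ and $|[H,\Aut(G)]|$. The starting observation is the definition $[H,\Aut(G)] = \langle S(H,\Aut(G))\rangle$, from which $S(H,\Aut(G)) \subseteq [H,\Aut(G)]$, and hence $|S(H,\Aut(G))| \le |[H,\Aut(G)]|$, follow at once. So the entire task reduces to showing that enlarging the cardinality appearing in the bound of Theorem \ref{prop3.8} only \emph{decreases} that bound, so that the weaker inequality with $|[H,\Aut(G)]|$ in place of $|S(H,\Aut(G))|$ still holds.

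To make this precise, write $n := |H : L(H,\Aut(G))|$ and, for a positive integer $t$, set
\[
g(t) = \frac{1}{t}\left(1 + \frac{t - 1}{n}\right).
\]
Theorem \ref{prop3.8} asserts $\Pr(H,\Aut(G)) \ge g(|S(H,\Aut(G))|)$, while the right-hand side of the corollary is exactly $g(|[H,\Aut(G)]|)$. I would therefore show that $g$ is non-increasing in $t$, so that $g(|S(H,\Aut(G))|) \ge g(|[H,\Aut(G)]|)$, and then chain the two inequalities.

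Rather than differentiating, the cleanest route is a direct algebraic comparison. Setting $s := |S(H,\Aut(G))|$ and $m := |[H,\Aut(G)]|$ with $s \le m$, the desired inequality $g(s) \ge g(m)$ becomes, after clearing the positive denominators $s$, $m$ and $n$, equivalent to $m(n-1) \ge s(n-1)$. Since $H \ne L(H,\Aut(G))$ throughout the paper we have $n \ge 2$, so $n - 1 > 0$ and the inequality collapses to $m \ge s$, which is already in hand. This yields the chain $\Pr(H,\Aut(G)) \ge g(s) \ge g(m)$, which is the claim.

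There is no serious obstacle here; the argument is essentially immediate once the monotonicity of $g$ is isolated. The only point deserving a moment of care is the direction of that monotonicity: replacing $|S(H,\Aut(G))|$ by the larger $|[H,\Aut(G)]|$ weakens the estimate, so one must confirm that the outcome is still a valid (if weaker) lower bound and not a reversed inequality. The cross-multiplication above settles this and simultaneously clarifies that the hypothesis $H \ne L(H,\Aut(G))$ is used only to ensure $n > 1$, so that the comparison does not degenerate.
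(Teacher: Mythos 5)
Your proposal is correct and follows essentially the same route as the paper: the paper likewise deduces the corollary from Theorem \ref{prop3.8} by combining $|[H,\Aut(G)]| \geq |S(H,\Aut(G))|$ with the monotonicity inequality $\frac{1}{n}\left(1 + \frac{n-1}{|H : L(H,\Aut(G))|}\right) \geq \frac{1}{m}\left(1 + \frac{m-1}{|H : L(H,\Aut(G))|}\right)$ for $m \geq n$, which is exactly your cross-multiplication step (and, as a minor aside, that step needs only $|H : L(H,\Aut(G))| - 1 \geq 0$, so the non-strict inequality survives even without invoking $H \neq L(H,\Aut(G))$).
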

\begin{proof}
For any two integers  $m \geq n$, we have
\begin{equation}\label{lemma4.4}
\frac {1}{n}\left(1 + \frac{n - 1}{|H : L(H,\Aut(G))|}\right) \geq \frac {1}{m}\left(1 + \frac{m - 1}{|H : L(H,\Aut(G))|}\right).
\end{equation}
%with equality  if and only if $m = n$. 
%Since $|[H,\Aut(G)]|  \geq |S(H, \,\Aut(G))|$, the result follows from Theorem \ref{prop3.8} and  \eqref{lemma4.4}.
Now, the result follows from Theorem \ref{prop3.8} and  \eqref{lemma4.4} noting that 
\[
|[H,\Aut(G)]|  \geq |S(H, \,\Aut(G))|.
\]
% Note that the equality holds if and only if equality holds
%in Theorem \ref{prop3.8} and  \eqref{lemma4.4}.
\end{proof}
\noindent Note that Corollary \ref{lastcor} is a generalization of  \cite[Equation (3)]{aK16}. 
%It is worth mentioning that  Theorem \ref{prop3.8} gives better  lower bound  than the lower bound given by Corollary \ref{lastcor}. 
Also
\begin{align*}
 \frac {1}{|[H,\Aut(G)]|}\left(1 + \frac {|[H,\Aut(G)]| - 1}{|H : L(H,\Aut(G))|}\right)
\geq & \frac
{|L(H,\Aut(G))|}{|H|} \\
& + \frac {p(|H| - |L(H,\Aut(G))|)}{|H||\Aut(G)|}.
\end{align*}
Hence, Corollary \ref{lastcor} gives better lower bound than  the lower bound obtained in \cite[Theorem 2.3 (i)]{moga}.
We conclude this section with the following generalization of \cite[Proposition 3]{aK16} which gives several equivalent conditions for  equality in Corollary \ref{lastcor}.
\begin{proposition}
  If $H$ is a subgroup of a finite group $G$ then the following statements are equivalent.
  \begin{enumerate}
    \item ${\Pr}(H,\Aut(G))= \frac {1}{|[H,\Aut(G)]|}\left(1 + \frac {|[H,\Aut(G)]| - 1}{|H : L(H,\Aut(G))|}\right).$

    \item $|{\orb}(x)| = |[H,\Aut(G)]|$ for all $x\in H\setminus L(H,\Aut(G))$.
    \item ${\orb}(x) = x[H,\Aut(G)]$ for all $x\in H\setminus L(H,\Aut(G))$, and hence $[H, \Aut(G)]$ $\subseteq L(H,\Aut(G))$.
    \item $C_{\Aut(G)}(x) \lhd \Aut(G)$ and $\frac{\Aut(G)}{C_{\Aut(G)}(x)} \cong [H, \Aut(G)]$ for all $x\in H\setminus L(H,\Aut(G))$.
    \item $[H, \Aut(G)] = \{x^{-1}\alpha(x) : \alpha \in\Aut(G)\}$ for all $x\in H\setminus L(H,\Aut(G))$.
  \end{enumerate}
\end{proposition}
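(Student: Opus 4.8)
The plan is to abbreviate $K=[H,\Aut(G)]$ and $L=L(H,\Aut(G))$, to record the standing inclusion $\orb(x)\subseteq xK$ for every $x\in H$ (which holds because $\alpha(x)=x[x,\alpha]$ with $[x,\alpha]\in K$), and to prove the five statements equivalent by the cycle (a)$\Leftrightarrow$(b) and (b)$\Rightarrow$(c)$\Rightarrow$(d)$\Rightarrow$(e)$\Rightarrow$(b). For (a)$\Leftrightarrow$(b) I would split the sum in \eqref{grpdefn} over $L$ and $H\setminus L$: on $L$ each orbit is a singleton, while on $H\setminus L$ the inclusion $\orb(x)\subseteq xK$ yields $|\orb(x)|\le |K|$. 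The very computation behind Corollary \ref{lastcor} then shows $\Pr(H,\Aut(G))\ge \frac{|L|}{|H|}+\frac{|H|-|L|}{|K|\,|H|}$, and this lower bound is exactly the quantity in (a); hence equality holds if and only if $|\orb(x)|=|K|$ for every $x\in H\setminus L$, which is precisely (b).

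The remaining links in the cycle are short once the key structural fact is in hand. Given (b), the inclusion $\orb(x)\subseteq xK$ together with $|\orb(x)|=|K|=|xK|$ forces $\orb(x)=xK$ for all $x\in H\setminus L$, the first half of (c), equivalently $\{x^{-1}\alpha(x):\alpha\in\Aut(G)\}=K$, which is (e). The engine driving (c)$\Rightarrow$(d) is the crossed-homomorphism identity $\psi_x(\alpha\beta)=\psi_x(\alpha)\,\alpha(\psi_x(\beta))$ for $\psi_x(\alpha):=x^{-1}\alpha(x)$: once $K\subseteq L$ is known, every $\alpha$ fixes $K$ pointwise, so $\psi_x$ collapses to a genuine surjective homomorphism $\Aut(G)\to K$ whose kernel is $C_{\Aut(G)}(x)$; this gives $C_{\Aut(G)}(x)\lhd\Aut(G)$ and $\Aut(G)/C_{\Aut(G)}(x)\cong K$, i.e. (d). Finally (d)$\Rightarrow$(e)$\Rightarrow$(b) are pure order counts: orbit--stabilizer turns $\Aut(G)/C_{\Aut(G)}(x)\cong K$ into $|\orb(x)|=|\Aut(G):C_{\Aut(G)}(x)|=|K|$, and combined with $\orb(x)\subseteq xK$ this returns $\orb(x)=xK$ and then (b).

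The main obstacle is the clause hidden in (c): that $\orb(x)=xK$ for all $x\in H\setminus L$ actually forces $K\subseteq L$, i.e. that $\Aut(G)$ acts trivially on $K$. From the hypothesis one gets quickly that $\alpha(xK)=\orb(\alpha(x))=\orb(x)=xK$, whence $\alpha(K)=K$, so $K$ is characteristic, hence normal, in $G$; but pointwise triviality of the action on $K$ is strictly stronger and cannot be read off from a single base point $x$, since the crossed-homomorphism identity is perfectly consistent with a nontrivial action (it merely exhibits $\{(\psi_x(\alpha),\alpha)\}$ as a complement to $K$ in $K\rtimes\Aut(G)$). I therefore expect to have to use the hypothesis at all base points simultaneously --- in particular at $x^{-1}$, and, where $\orb(x)\subseteq H$, at the elements of $xK$ themselves --- mirroring the commuting-probability arguments of \cite{ND10} and \cite{aK16}, in order to force $\alpha(k)=k$ for every $k\in K$ and conclude $K\subseteq L$. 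Once this single fact is secured, the crossed homomorphism becomes an honest homomorphism and every remaining implication in the cycle closes by the elementary counting indicated above.
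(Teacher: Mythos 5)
Your overall architecture is exactly the paper's: the standing inclusion $\orb(x)\subseteq x[H,\Aut(G)]$, the equivalence (a)$\Leftrightarrow$(b) by splitting the sum in \eqref{grpdefn} over $L(H,\Aut(G))$ and its complement, the map $\alpha\mapsto x^{-1}\alpha(x)$ becoming a surjective homomorphism $\Aut(G)\to[H,\Aut(G)]$ with kernel $C_{\Aut(G)}(x)$ once $[H,\Aut(G)]\subseteq L(H,\Aut(G))$ is known, and the orbit--stabilizer count closing the cycle back to (b). But there is one genuine gap: the step you yourself flag as ``the main obstacle'' --- that the orbit equation $\orb(x)=x[H,\Aut(G)]$ for all $x\in H\setminus L(H,\Aut(G))$ forces $[H,\Aut(G)]\subseteq L(H,\Aut(G))$ --- is never actually proved. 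You only announce an expectation that invoking the hypothesis ``at all base points simultaneously, in particular at $x^{-1}$'' will do it; no such argument is carried out, and as written the chain (b)$\Rightarrow$(c)$\Rightarrow$(d) is broken at precisely the point where (d) needs $\psi_x$ to be an honest homomorphism rather than a crossed one.

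The missing idea is far simpler than the bootstrap you envisage, and it does come from a single well-chosen base point --- one inside $[H,\Aut(G)]$ itself. Suppose $y\in[H,\Aut(G)]\setminus L(H,\Aut(G))$. Applying the hypothesis at $y$ gives $\orb(y)=y[H,\Aut(G)]=[H,\Aut(G)]$, since $y$ lies in that subgroup; but then the identity element belongs to $\orb(y)$, i.e.\ $\alpha(y)$ is the identity for some $\alpha\in\Aut(G)$, forcing $y$ to be the identity and hence $y\in L(H,\Aut(G))$, a contradiction. Translating the coset equation to a coset containing the identity collapses everything at once; no pointwise analysis of how $\Aut(G)$ acts on $[H,\Aut(G)]$ is needed. (Your observation that the cocycle identity alone is consistent with a nontrivial action is correct --- which is exactly why the identity-element trick, not the crossed-homomorphism identity, is what settles this step.) One caveat worth knowing: applying clause (c) at $y$ tacitly assumes $y\in H$, i.e.\ $[H,\Aut(G)]\subseteq H$, which is automatic when $H=G$ or $H$ is characteristic in $G$ but is passed over in silence by the paper; so your unease about this step was not misplaced, yet the recorded resolution is the one-line contradiction above, which your proposal does not reach.
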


\begin{proof}
First note that for all $x \in H$
\begin{equation}\label{lasteq1}
\orb(x) \subseteq x[H, \Aut(G)].
\end{equation}
Suppose that (a) holds. Then, by \eqref{grpdefn}, we have 
%\begin{equation}
\[
\underset{x\in H \setminus L(H, \Aut(G))}{\sum}\left(\frac{1}{|\orb(x)|} - \frac{1}{|[H,\Aut(G)]|}\right) = 0.
\]
%\end{equation}
Now using \eqref{lasteq1}, we get (b). Also, if (b) holds then from \eqref{grpdefn}, we have (a). Thus (a) and (b) are equivalent.
%\begin{align*}
%    \Pr(H,\Aut(G)) & = \frac {1}{|H|}\underset{x\in H}{\sum}\frac{1}{|\orb(x)|} \\
%     & = \frac{|L(H, \Aut(G))|}{|H|} + \frac {1}{|H|}\underset{x\in H \setminus L(H, \Aut(G))}{\sum}\frac{1}{|\orb(x)|}.
%\end{align*}
%Therefore, $\frac {1}{|[H,\Aut(G)]|}\left(1 + \frac {|[H,\Aut(G)]| - 1}{|H : L(H,\Aut(G))|}\right) = \frac{|L(H, \Aut(G))|}{|H|} + \frac {1}{|H|}\underset{x\in H \setminus L(H, \Aut(G))}{\sum}\frac{1}{|\orb(x)|}$ which implies $|{\orb}(x)| = |[H,\Aut(G)]|$ for all $x\in H\setminus L(H,\Aut(G))$. Again if $|{\orb}(x)| = |[H,\Aut(G)]|$ for all $x\in H\setminus L(H,\Aut(G))$ then from ......(equation number) we get ${\Pr}(H,\Aut(G))= \frac {1}{|[H,\Aut(G)]|}\left(1 + \frac {|[H,\Aut(G)]| - 1}{|H : L(H,\Aut(G))|}\right)$. Hence (a) and (b) are equivalent.

  Suppose that (b) holds. Then for all $x\in H\setminus L(H,\Aut(G))$ we have $|{\orb}(x)| = |x[H,\Aut(G)]|$. Hence, using \eqref{lasteq1} we get (c).
%   Since $\orb(x) \subseteq x[H, \Aut(G)]$ for all $x\in H$, hence first part of (c) follows.
 If $[H, \Aut(G)] \nsubseteq L(H,\Aut(G))$ then there exist $y \in [H, \Aut(G)]\setminus L(H, \Aut(G))$. Therefore  $\orb(y) = y[H, \Aut(G)] = [H, \Aut(G)]$, a contradiction. Hence $[H, \Aut(G)] \subseteq L(H,\Aut(G))$. It can be seen that  the map $f:\Aut(G)\to [H, \Aut(G)]$ given by $\alpha \mapsto x^{-1}\alpha(x)$, where $x$ is a fixed element of $H \setminus L(H, \Aut(G))$, is a surjective homomorphism with kernel $C_{\Aut(G)}(x)$. Therefore (d) follows. 
%$C_{\Aut(G)}(x) \lhd \Aut(G)$ and $\frac{\Aut(G)}{C_{\Aut(G)}(x)} \cong [H, \Aut(G)]$  for all $x\in H\setminus L(H,\Aut(G))$.
Since  $|\Aut(G)|/|C_{\Aut(G)}(x)| = |\orb(x)|$ for all $x\in H\setminus L(H,\Aut(G))$ we have (b). 
%If (d) holds then $|\orb(x)| = |[H,\Aut(G)]|$ for all $x\in H\setminus L(H,\Aut(G))$ from the face that $|\frac{\Aut(G)}{C_{\Aut(G)}(x)}| = |{\orb}(x)|$.
Thus (b), (c) and  (d) are equivalent.
 
The  equivalence of (c) and (e) follows from the fact that
$\orb(x) = x[H, \Aut(G)]$ if and only if $x^{-1}\orb(x) = [H, \Aut(G)]$ for all $x\in H\setminus L(H,\Aut(G))$. 
%  Finally, it is clear that $\orb(x) = x[H, \Aut(G)]$ for all $x\in H\setminus L(H,\Aut(G))$ if and only if $x^{-1}\orb(x) = [H, \Aut(G)]$ for all $x\in H\setminus L(H,\Aut(G))$. Hence equivalence of (c) and (e) follows.
This completes the proof.
\end{proof}

%\section{Some Characterizations}
%{\bf In \cite{aK16}, Arora and Karan obtain a characterization of a finite group $G$ if equality holds in Corollary \ref{lastcor}.}

\section{Autoisoclinism between pairs of groups}
The concept of isoclinism between two groups was introduced by Hall \cite{pH40} in the year 1940. After many years,  in 2013, Moghaddam et al. \cite{msE13} have introduced autoisoclinism between two groups. Recall that two groups $G_1$ and $G_2$ are said to be autoisoclinic  if there exist isomorphisms $\psi : \frac{G_1}{L(G_1)} \to \frac{G_2}{L(G_2)}$, $\gamma : \Aut(G_1) \to \Aut(G_2)$ and $\beta : [G_1, \Aut(G_1)] \to [G_2, \Aut(G_2)]$ such that the following diagram commutes
\begin{center}
$
\begin{CD}
   \frac{G_1}{L(G_1)} \times \Aut(G_1) @>\psi \times \gamma>> \frac{G_2}{L(G_2)} \times
   \Aut(G_2)\\
   @VV{a_{(G_1, \Aut(G_1))}}V  @VV{a_{(G_2, \Aut(G_2))}}V\\
   [G_1, \Aut(G_1)] @> \beta >> [G_2, \Aut(G_2)]
\end{CD}
$
\end{center}
where the maps $a_{(G_i, \Aut(G_i))}: \frac{G_i}{L(G_i)} \times \Aut(G_i) \to [G_i, \Aut(G_i)]$ for  $i = 1, 2$ are given by
\[
a_{(G_i, \Aut(G_i))}(x_iL(G_i), \alpha_i) = [x_i, \alpha_i].
\]
Such a triple $(\psi, \gamma, \beta)$ is called an autoisoclinism between $G_1$ and $G_2$.
%Rismanchian and Sepehrizadeh  \cite{Rismanchian15} showed that the autocommutativity degree of two autoisoclinic groups are same. 
We  generalize the notion of autoisoclinism between two groups in the following definition.
%and give a generalized result of  \cite[Lemma 2.5]{Rismanchian15}.
\begin{definition}
Let $H_1$ and $H_2$ be  two subgroups of the groups $G_1$ and $G_2$ respectively. A pair of groups  $(H_1, G_1)$ is said to be autoisoclinic to another pair of groups $(H_2, G_2)$   if there exist isomorphisms $\psi : \frac{H_1}{L(H_1, \Aut{G_1})} \to \frac{H_2}{L(H_2, \Aut(G_2))}$, $\gamma : \Aut(G_1) \to \Aut(G_2)$ and $\beta :
[H_1, \Aut(G_1)] \to [H_2, \Aut(G_2)]$  such that the following diagram commutes
%\vspace{.25cm}
\begin{center}
$
\begin{CD}
   \frac{H_1}{L(H_1, \Aut(G_1))} \times \Aut(G_1) @>\psi \times \gamma>> \frac{H_2}{L(H_2, \Aut(G_2))} \times
   \Aut(G_2)\\
   @VV{a_{(H_1, \Aut(G_1))}}V  @VV{a_{(H_2, \Aut(G_2))}}V\\
   [H_1, \Aut(G_1)] @> \beta >> [H_2, \Aut(G_2)]
\end{CD}
$
\end{center}
\noindent where the maps $a_{(H_i, \Aut(G_i))}: \frac{H_i}{L(H_i, \Aut(G_i))} \times \Aut(G_i) \to [H_i, \Aut(G_i)]$ for $i = 1,
2$ are given by
\[
a_{(H_i, \Aut(G_i))}(x_iL(H_i, \Aut(G_i)), \alpha_i) = [x_i, \alpha_i].
\]
Such a triple $(\psi, \gamma, \beta)$ is said to be an autoisoclinism between the pairs $(H_1, G_1)$
and $(H_2, G_2)$.
\end{definition}
We conclude this section with the following generalization of \cite[Lemma 2.5]{Rismanchian15}.
\begin{theorem}
Let $G_1$ and $G_2$ be two finite groups with subgroups $H_1$ and $H_2$ respectively. If $(\psi, \gamma, \beta)$ is an autoisoclinism between the pairs $(H_1, G_1)$ and
$(H_2, G_2)$ then
\[
\Pr(H_1, \Aut(G_1)) = \Pr(H_2, \Aut(G_2)).
\]
\end{theorem}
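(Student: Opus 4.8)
The plan is to express each $\Pr(H_i,\Aut(G_i))$ purely in terms of the data transported by the autoisoclinism --- the quotient $\frac{H_i}{L(H_i,\Aut(G_i))}$, the group $\Aut(G_i)$, and the map $a_{(H_i,\Aut(G_i))}$ --- and then to read off the equality from the commuting square. Throughout, write $L_i := L(H_i,\Aut(G_i))$. The starting observation is that $\alpha(x)=x$ if and only if $[x,\alpha]=1$, so that by \eqref{GenAutComDeg}
\[
|H_i||\Aut(G_i)|\Pr(H_i,\Aut(G_i)) = \left|\{(x,\alpha)\in H_i\times\Aut(G_i): [x,\alpha]=1\}\right|.
\]
First I would record that $[x,\alpha]$ depends only on the coset $xL_i$: for $y\in L_i$ one has $\alpha(y)=y$, whence $[xy,\alpha]=y^{-1}[x,\alpha]y$, and since $L_i\subseteq Z(G_i)$ this equals $[x,\alpha]$. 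This is precisely the well-definedness underlying $a_{(H_i,\Aut(G_i))}$.

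Consequently, for each fixed $\alpha$ the fixed-point set $C_{H_i}(\alpha)=\{x\in H_i:\alpha(x)=x\}$ is a union of \emph{full} cosets of $L_i$, namely those $xL_i$ with $a_{(H_i,\Aut(G_i))}(xL_i,\alpha)=1$. Putting
\[
N_i := \left|\{(xL_i,\alpha)\in \tfrac{H_i}{L_i}\times\Aut(G_i): a_{(H_i,\Aut(G_i))}(xL_i,\alpha)=1\}\right|,
\]
the count therefore scales exactly by $|L_i|$, giving $\left|\{(x,\alpha):[x,\alpha]=1\}\right| = |L_i|\,N_i$ and hence
\[
\Pr(H_i,\Aut(G_i)) = \frac{N_i}{|H_i:L_i|\,|\Aut(G_i)|}.
\]

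The key step is to prove $N_1=N_2$ using the commutativity of the diagram, which reads $\beta\circ a_{(H_1,\Aut(G_1))} = a_{(H_2,\Aut(G_2))}\circ(\psi\times\gamma)$. As $\beta$ is an isomorphism it is injective and fixes the identity, so for any $(xL_1,\alpha)$ we have $a_{(H_1,\Aut(G_1))}(xL_1,\alpha)=1$ if and only if $\beta$ of it is trivial, if and only if $a_{(H_2,\Aut(G_2))}\big((\psi\times\gamma)(xL_1,\alpha)\big)=1$. Since $\psi\times\gamma$ is a bijection (a product of the isomorphisms $\psi$ and $\gamma$), it restricts to a bijection between the set counted by $N_1$ and the set counted by $N_2$; therefore $N_1=N_2$. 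Finally, $\psi$ and $\gamma$ being isomorphisms force $|H_1:L_1|=|H_2:L_2|$ and $|\Aut(G_1)|=|\Aut(G_2)|$, so substituting these together with $N_1=N_2$ into the displayed formula above yields $\Pr(H_1,\Aut(G_1))=\Pr(H_2,\Aut(G_2))$.

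I expect the only delicate point to be the coset-reduction: one must be sure the fixed-point set splits into complete $L_i$-cosets so that passing to $\frac{H_i}{L_i}$ multiplies the count by exactly $|L_i|$. This rests entirely on the inclusion $L_i\subseteq Z(G_i)$ noted in the introduction; granting it, everything else is a transparent transport of a cardinality through the commuting square.
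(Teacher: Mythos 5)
Your proposal is correct and follows essentially the same route as the paper: both descend to counting fixed pairs $(xL_i,\alpha)$ in $\frac{H_i}{L(H_i,\Aut(G_i))}\times\Aut(G_i)$, rescale by $|L(H_i,\Aut(G_i))|$ to recover the count in $H_i\times\Aut(G_i)$, and conclude via \eqref{GenAutComDeg}. You merely make explicit what the paper leaves implicit --- the well-definedness of the fixed-point condition on cosets (via $L_i\subseteq Z(G_i)$) and the bijection $N_1=N_2$ extracted from the commuting square --- which is a welcome sharpening, not a different argument.
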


\begin{proof}
Consider the sets $\mathcal{S} = \{(x_1L(H_1,\Aut(G_1)),\alpha_1)\in \frac {H_1}{L(H_1,\Aut(G_1))}\times
\Aut(G_1):\alpha_1(x_1) = x_1\}$ and $\mathcal{T} = \{(x_2L(H_2,\Aut(G_2)),\alpha_2)\in \frac
{H_2}{L(H_2,\Aut(G_2))}\times \Aut(G_2):\alpha_2(x_2) = x_2\}$. Since  $(H_1, G_1)$ is
autoisoclinic to $(H_2, G_2)$ we have $|\mathcal{S}| = |\mathcal{T}|$. Again, it is clear that
\begin{equation}\label{auto-eq5.1}
|\{(x_1,\alpha_1)\in H_1\times \Aut(G_1):
\alpha_1(x_1) = x_1\}|=|L(H_1,\Aut(G_1))||\mathcal{S}|
\end{equation}
and
\begin{equation}\label{auto-eq5.2}
|\{(x_2,\alpha_2)\in H_2\times \Aut(G_2):
\alpha_2(x_2) = x_2\}|=|L(H_2,\Aut(G_2))||\mathcal{T}|.
\end{equation}
Hence, the result follows from \eqref{GenAutComDeg}, \eqref{auto-eq5.1} and \eqref{auto-eq5.2}.
\end{proof}

\section*{Acknowledgment}
%The authors would like to thank the referee for his/her valuable comments and suggestions.
The first author would like to thank  DST for the INSPIRE Fellowship.

\end{document}